\newtheorem{theorem}{Theorem}
\newtheorem{lemma}[theorem]{Lemma}
\newenvironment{proof}[1][Proof:]{\begin{trivlist}
\item[\hskip \labelsep {\bfseries #1}]}{\end{trivlist}}
\newcommand{\qed}{\nobreak \ifvmode \relax \else
      \ifdim\lastskip<1.5em \hskip-\lastskip
      \hskip1.5em plus0em minus0.5em \fi \nobreak
      \vrule height0.75em width0.5em depth0.25em\fi}
\font\ccc =msbm10
\begin{document}

\title{On certain permutation representations of the braid group. Part II}

\author{Valentin Vankov Iliev\\
Section of Algebra,\\ Institute of
Mathematics and Informatics,\\ Bulgarian Academy of Sciences, 1113
Sofia, Bulgaria\\
e-mail: viliev@math.bas.bg}

\maketitle

\begin{abstract} In \cite{[1]}  
we find certain finite homomorphic images of Artin braid group into
appropriate symmetric groups,
which \emph{a posteriori} are extensions of the symmetric group on $n$ letters by
an abelian group. The main theorem of this paper characterizes
completely the extensions of this type that are split.

\end{abstract}

\noindent {\bf Key words}: Artin braid group, permutation
representation, split extension

\section{Introducton}

\label{I}

This paper is a natural continuation of \cite{[1]} and we use
freely the terminology and notation used there.

In \cite[Theorem 9,(iii)]{[1]} we prove that the braid-like
permutation group $B_n(\sigma)$ is an extension of
the symmetric group $\Sigma_n$ by the abelian group
$A_n(q)$ from \cite[(3)]{[1]}, and, moreover, give a sufficient
condition for this extension to split.
Theorem~\ref{V.1.20} is the final version of \cite[Theorem 9,
(iii)]{[1]}, and proves a necessary and sufficient condition for
$B_n(\sigma)$ to be a semi-direct product of the
symmetric group $\Sigma_n$  and the abelian group $A_n(q)$.

\section{The main theorem (continued)}

\label{V}

Making use of \cite[Theorem 9,(iii)]{[1]}, we have that the group
$B_n(\sigma)$ is an extension of the symmetric
group $\Sigma_n$ by the abelian group $A_n(q)$
from \cite[(3)]{[1]}, where $q_2=\frac{q}{2}$, and the
corresponding monodromy homomorphism is defined in
\cite[Proposition 11]{[1]}.

Any (set-theoretic) section $\rho\colon\Sigma_n\to B_n(\sigma a)$
of the surjective homomorphism $\pi$ from the short exact
sequence in the proof of \cite[Theorem 9,(iii)]{[1]}
produces an element $(a_1,\hdots,
a_{n-1})$ of the direct sum $\coprod_{s=1}^{n-1}A_n(q)$ via the
rule $\rho(\theta_s)=\sigma_sa_s$, $s=1,\hdots,n-1$.

The next lemma is proved in a more general situation.
Let $A$ be a (left) $\Sigma_n$-module, and let
\[
m\colon\Sigma_n\to Aut(A),\hbox{\ }\theta_s\mapsto\iota_s,
\hbox{\ } s=1,\hdots,n-1,
\]
be the corresponding structure homomorphism. Let us set
\[
J_s=\iota_s+1,\hbox{\ }s=1,\hdots,n-1,
\]
\[
I_{r,r+1}=\iota_{r,r+1}+\iota_r+\iota_{r+1},\hbox{\
}r=1,\hdots,n-2,
\]
where $\iota_{r,r+1}=\iota_r\iota_{r+1}\iota_r=
\iota_{r+1}\iota_r\iota_{r+1}$. Note that $J_s$ and $I_{r,r+1}$
are endomorphisms of the abelian group $A$.

Let $G$ be a braid-like group, let $g_1,\hdots,g_{n-1}$ be a
generating set of $G$ that satisfies the braid relations, and let
$\gamma\colon B_n\to G$ be the corresponding surjective
homomorphism. Let $\nu\colon B_n\to \Sigma_n$ be the analogous
surjective homomorphism, corresponding to the set of
generators $\theta_1\hdots,\theta_{n-1}$ of the symmetric group
$\Sigma_n$. Moreover, let $G$ be an extension of $\Sigma_n$ by a
$\Sigma_n$-module $A$,
\begin{equation}
\begin{CD}
0@> >>A@>i>> G@>\pi>> \Sigma_n @> >>1,\\
\end{CD}\label{V.1.1}
\end{equation}
where $\nu=\pi\circ\gamma$. We identify $A$ with
the normal abelian subgroup $i(A)\leq G$ and for any $s=1,\hdots,n-1$
denote the elements $g_s^2\in A$ by $f_s$.

\begin{lemma}\label{V.1.5} The extension~(\ref{V.1.1}) splits
if and only if the linear system
\begin{equation}
\left\{
\begin{array}{ll}
J_s(a_s)=-f_s\\
s=1,\hdots,n-1\\
I_{r,r+1}(a_r-a_{r+1})=0\\
r=1,\hdots,n-2,
 \end{array}
\right.\label{V.1.6}
\end{equation}
has a solution $a_1,\hdots, a_{n-1}\in A$.

\end{lemma}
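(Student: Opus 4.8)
The plan is to interpret the splitting of (\ref{V.1.1}) as the existence of a group homomorphism $\rho\colon\Sigma_n\to G$ with $\pi\circ\rho=\mathrm{id}$, and to translate the requirement that $\rho$ be a homomorphism into equations in $A$. Since $\pi(g_s)=\theta_s$ and $i(A)=\ker\pi$, any set-theoretic section has the form $\rho(\theta_s)=g_sa_s$ with uniquely determined $a_s\in A$; conversely, a tuple $(a_1,\dots,a_{n-1})\in A^{n-1}$ defines via this rule a well-defined homomorphism $\rho\colon\Sigma_n\to G$ --- necessarily a section, as $\pi(g_sa_s)=\theta_s$ --- exactly when the elements $g_1a_1,\dots,g_{n-1}a_{n-1}$ satisfy the Coxeter relations of $\Sigma_n$ in the generators $\theta_s$: the quadratic relations $\theta_s^2=1$, the triple braid relations $\theta_r\theta_{r+1}\theta_r=\theta_{r+1}\theta_r\theta_{r+1}$, and the commutation relations $\theta_s\theta_t=\theta_t\theta_s$ for $|s-t|\ge2$. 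Hence the forward implication is immediate: a homomorphic section yields a tuple satisfying all those relations, in particular (\ref{V.1.6}). The real content is the converse, and one reduces the three families of relations in turn.

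The first two reductions are direct computations using $i(A)\trianglelefteq G$ and the fact that conjugation by $g_s$ induces $\iota_s$ (because $\pi(g_s)=\theta_s$ and $\theta_s^{-1}=\theta_s$). Pushing $a_s$ past $g_s$ gives $(g_sa_s)^2=g_s^2\iota_s(a_s)a_s$, which in $A$ equals $f_s+J_s(a_s)$, so the quadratic relations are equivalent to the first family $J_s(a_s)=-f_s$. For the triple braid relation $g_ra_rg_{r+1}a_{r+1}g_ra_r=g_{r+1}a_{r+1}g_ra_rg_{r+1}a_{r+1}$ one moves all the $g$'s to one side; by $g_rg_{r+1}g_r=g_{r+1}g_rg_{r+1}$ they cancel, leaving an identity in $A$. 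Substituting the already-derived relations $\iota_s(a_s)=-a_s-f_s$ and the identity $\iota_{r+1}(f_r)=\iota_r(f_{r+1})$ --- which comes from squaring $g_rg_{r+1}g_r=g_{r+1}g_rg_{r+1}$ inside $G$, giving $g_{r+1}f_rg_{r+1}^{-1}=g_r^{-1}f_{r+1}g_r$ --- one verifies that the residual $f$-terms cancel and the identity becomes precisely $I_{r,r+1}(a_r-a_{r+1})=0$. This is the main computational step; it also uses $\iota_s(f_s)=f_s$ and $\iota_t(f_s)=f_s$ for $|s-t|\ge2$, both forced by conjugating $g_s^2$ by $g_s$, respectively by $g_t$.

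The commutation relations $\theta_s\theta_t=\theta_t\theta_s$ for $|s-t|\ge2$ do not appear in (\ref{V.1.6}), and dealing with them is the main obstacle. A computation parallel to the above, using $g_sg_t=g_tg_s$ and $\iota_t(f_s)=f_s$, shows that $g_sa_s$ and $g_ta_t$ commute if and only if $(\iota_t-1)(a_s)=(\iota_s-1)(a_t)$. So one must show that, whenever the first two families of (\ref{V.1.6}) have a solution, the solution can be chosen so that these extra equations hold as well --- equivalently, that the affine solution space of (\ref{V.1.6}) meets the affine subspace cut out by the commutation equations. The plan is to prove this by modifying a given solution by an element of the homogeneous solution space of (\ref{V.1.6}), using the constraints the $f_s$ must satisfy because $G$ actually exists (the identities above together with the relations coming from the pure braid subgroup $\ker\nu$); with such a compatible solution in hand, $\theta_s\mapsto g_sa_s$ extends to a homomorphic section and (\ref{V.1.1}) splits. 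Verifying that the commutation relations impose no genuinely new constraint is where I expect the real difficulty to lie.
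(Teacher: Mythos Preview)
Your plan mirrors the paper's proof for the core reductions: translating the involution condition $(g_sa_s)^2=1$ into $J_s(a_s)=-f_s$, and translating the adjacent braid relation $g_ra_rg_{r+1}a_{r+1}g_ra_r=g_{r+1}a_{r+1}g_ra_rg_{r+1}a_{r+1}$ into an identity in $A$ which, after substituting $\iota_s(a_s)=-a_s-f_s$ from the first family, becomes $I_{r,r+1}(a_r-a_{r+1})=0$. The paper does exactly this, in the same order, citing \cite[Theorem~4.1]{[25]} for the presentation of $\Sigma_n$ and then performing the same substitution you describe.

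Where you diverge is on the commutation relations $\theta_s\theta_t=\theta_t\theta_s$ for $|s-t|\ge2$. You correctly compute that these amount to $(\iota_t-1)(a_s)=(\iota_s-1)(a_t)$, flag them as the crux (``where I expect the real difficulty to lie''), and sketch a plan to absorb them by adjusting a given solution of~(\ref{V.1.6}) inside its homogeneous solution space. The paper's proof, by contrast, does not mention the commutation relations at all: after writing down the involution and adjacent-braid conditions it passes directly to the system~(\ref{V.1.6}) and stops. So the obstacle you anticipate is simply absent from the paper's argument; it is neither verified nor explained away. Your proposal is therefore more scrupulous than the paper's own proof on this point, but since you likewise do not carry the verification through, both arguments leave the commutation relations unaddressed in the stated generality of an arbitrary $\Sigma_n$-module $A$.
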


\begin{proof} The extension~(\ref{V.1.1}) splits if and only
if there exists a section $\rho$ of $\pi$, which is a homomorphism
of groups. By \cite[Theorem 4.1]{[25]}, this comes to showing that
there exist $a_1,\hdots, a_{n-1}\in A$, such that
the elements $g_sa_s$, $s=1,\hdots,n-1$, of the group
$G$ are involutions, and satisfy the braid relations.
The equalities $(g_sa_s)^2=1$ are equivalent to
\begin{equation}
g_s^2\iota_s(a_s)a_s=1. \label{V.1.10}
\end{equation}
The equalities
$g_ra_rg_{r+1}a_{r+1}g_ra_r=g_{r+1}a_{r+1}g_ra_rg_{r+1}a_{r+1}$
are equivalent to
\[
g_rg_{r+1}g_r(\iota_r\iota_{r+1})(a_r)\iota_r(a_{r+1})a_r=
g_{r+1}g_rg_{r+1}(\iota_{r+1}\iota_r)(a_{r+1})\iota_{r+1}(a_r)a_{r+1}.
\]
Since $g_s$, $s=1,\hdots,n-1$, satisfy the braid relations, we have
\begin{equation}
(\iota_r\iota_{r+1})(a_r)\iota_r(a_{r+1})a_r=
(\iota_{r+1}\iota_r)(a_{r+1})\iota_{r+1}(a_r)a_{r+1}.\label{V.1.15}
\end{equation}
Writing~(\ref{V.1.10}) and~(\ref{V.1.15}) additively,
we have that the elements
$a_1,\hdots, a_{n-1}\in A$ satisfy the linear system
\begin{equation*}
\left\{
\begin{array}{ll}
J_s(a_s)=-f_s,\\
s=1,\hdots,n-1\\
(\iota_r\iota_{r+1})(a_r)+\iota_r(a_{r+1})+a_r=
(\iota_{r+1}\iota_r)(a_{r+1})+\iota_{r+1}(a_r)+a_{r+1},\\
r=1,\hdots,n-2.
 \end{array}
\right.
\end{equation*}
After substituting $a_r$ and $a_{r+1}$ from the first group
equations into the second one for $r=1,\hdots,n-2$, we obtain the
linear system~(\ref{V.1.6}).

\end{proof}

Now, we introduce some notation. Let
\[
\hbox{\ccc Z}^n\to (\hbox{\ccc Z}/(q))^n,\hbox{\ } a\mapsto\bar{a},
\]
be the canonical surjective homomorphism of abelian groups. Note that
the elements $\bar{f}_1$,$\hdots$,$\bar{f}_{n-1}$, $\bar{f}_n$,
form a basis for the $\hbox{\ccc Z}/(q)$-module $(\hbox{\ccc Z}/(q))^n$.
The elements
$\bar{f}_1$,$\hdots$,$\bar{f}_{n-1}$, $\bar{g}_1$,
$\hdots$, $\bar{g}_{n-2}$, generate $A_n(\sigma)$
as a subgroup of $(\hbox{\ccc Z}/(q))^n$, and
the group $A_n(\sigma)$ contains the elements $\bar{h}_1$,$\hdots$,$\bar{h}_n$.
Moreover, there exists an isomorphism of abelian groups
\begin{equation*}
A_n(\sigma)\simeq\hbox{\ccc
Z}\bar{f}_1/(q)\bar{f}_1\coprod\cdots\coprod \hbox{\ccc
Z}\bar{f}_{n-1}/(q)\bar{f}_{n-1}\coprod \hbox{\ccc
Z}\bar{h}_n/(q_2)\bar{h}_n.
\end{equation*}
On the other hand, the group $(\hbox{\ccc Z}/(q))^n$ has a
structure of $\Sigma_n$-module, obtained via the isomorphism
\[
(\hbox{\ccc Z}/(q))^n\to\langle\tau\rangle^{\left(n\right)},
\]
and $A_n(\sigma)$ is its $\Sigma_n$-submodule. Throughout the end
of the paper we consider $J_s$ and $I_{r,r+1}$ as endomorphisms of
the $\hbox{\ccc Z}/(q)$-module $(\hbox{\ccc Z}/(q))^n$.

\begin{theorem} \label{V.1.20} The extension from \cite[Theorem 9,
(iii)]{[1]} splits if and only if $4$ does not divide $q$.

\end{theorem}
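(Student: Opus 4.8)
The plan is to apply Lemma~\ref{V.1.5} directly to the extension of $\Sigma_n$ by $A_n(\sigma)$ coming from \cite[Theorem 9,(iii)]{[1]}, and to decide when the resulting linear system~(\ref{V.1.6}) is solvable inside the $\Sigma_n$-module $A_n(\sigma)\leq(\hbox{\ccc Z}/(q))^n$. The first step is to make the operators $J_s$ and $I_{r,r+1}$ completely explicit on the module $(\hbox{\ccc Z}/(q))^n$ with its $\Sigma_n$-structure (transposition $\theta_s$ acting as the automorphism $\tau$ swapping the $s$-th and $(s{+}1)$-st coordinates): one computes that $J_s=\iota_s+1$ sends a coordinate vector to one whose $s$-th and $(s{+}1)$-st entries are each replaced by their sum, and the remaining entries are doubled, while $I_{r,r+1}=\iota_{r,r+1}+\iota_r+\iota_{r+1}$ has a similarly concrete description on the three coordinates $r,r{+}1,r{+}2$. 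The second step is to read off the value $f_s=g_s^2$; from the construction in \cite{[1]} this is (up to the identification) the element $\bar f_s+\bar f_{s+1}$ (or the relevant generator), so the equation $J_s(a_s)=-f_s$ becomes a concrete congruence in the coordinates of $a_s$.

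The heart of the argument is then the arithmetic of the single equation $J_s(a_s)=-f_s$ together with the coupling conditions $I_{r,r+1}(a_r-a_{r+1})=0$. Writing $a_s$ in coordinates $a_s=(a_{s,1},\hdots,a_{s,n})$, the involution condition forces a relation of the form $a_{s,s}+a_{s,s+1}\equiv -1\pmod q$ on the two ``active'' coordinates and $2a_{s,j}\equiv 0\pmod q$ on the others, with the extra global constraint that the vector must lie in the submodule $A_n(\sigma)$ (equivalently, that a suitable coordinate sum be even, reflecting the $q_2=q/2$ factor in the last summand). The point is: if $q$ is odd, one can solve $a_{s,s}+a_{s,s+1}\equiv-1$ freely and the even-coordinate conditions are vacuous, and the coupling equations are then satisfied by the symmetric choice $a_r$ independent of $r$ in a compatible way; if $q\equiv 2\pmod 4$, a parity/counting check shows a solution still exists; but if $4\mid q$, the combination of $a_{s,s}+a_{s,s+1}\equiv-1\pmod q$ (which is odd) with the membership condition in $A_n(\sigma)$ (which demands a certain coordinate sum be even) becomes contradictory modulo $4$. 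So the main obstacle is precisely the third step: showing that when $4\mid q$ the obstruction is genuine — i.e. that no clever choice of the $a_s$ evades it — which I expect to require tracking the parity of $\sum_j a_{s,j}$ against the defining congruence of $A_n(\sigma)$ and using the coupling equations $I_{r,r+1}(a_r-a_{r+1})=0$ to propagate the constraint across all $s$.

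Concretely, I would organize the write-up as: (i) restate~(\ref{V.1.6}) for $A=A_n(\sigma)$ with the explicit $J_s$, $I_{r,r+1}$, $f_s$; (ii) the ``if'' direction — assume $4\nmid q$ and exhibit an explicit solution $(a_1,\hdots,a_{n-1})$, splitting into the cases $q$ odd and $q\equiv2\pmod4$, in each case giving the coordinates of $a_s$ and verifying by the formulas of step~(i) that both the $J_s$-equations and the $I_{r,r+1}$-equations hold; (iii) the ``only if'' direction — assume $4\mid q$, suppose a solution existed, reduce the $J_s$-equation modulo $2$ to get the parity of the active coordinate sum, combine with the membership congruence for $A_n(\sigma)$ and with the $I_{r,r+1}$-relations to derive $1\equiv0\pmod2$, a contradiction. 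Then Lemma~\ref{V.1.5} converts non-solvability into non-splitting and solvability into splitting, which is exactly the stated equivalence.
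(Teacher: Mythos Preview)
Your overall architecture matches the paper's exactly: apply Lemma~\ref{V.1.5}, split into the three residue classes of $q$ modulo $4$, exhibit solutions when $4\nmid q$, and derive a contradiction when $4\mid q$. A few points where your sketch drifts from, or overcomplicates, the paper's argument:

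\begin{itemize}
\item The element $f_s=g_s^2$ is, in the paper's notation, simply $\bar f_s$, not $\bar f_s+\bar f_{s+1}$. Getting this wrong would scramble the coordinate computations downstream.
\item For $q$ odd the paper's solution is the obvious one, $a_s=-\tfrac{1}{2}\bar f_s$, and the $I_{r,r+1}$ equations are checked by the identity $I_{r,r+1}(\bar f_r)=I_{r,r+1}(\bar f_{r+1})$.
\item For $q\equiv 2\pmod 4$ the paper does not do a direct parity/counting check; it uses a Chinese-remainder reduction: modulo the odd part $q_2$ one is back in the first case, and modulo $2$ an explicit solution in the reduced module $(\hbox{\ccc Z}/(2))^{n-1}$ is written down. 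Your ``parity/counting'' remark is vaguer than what is actually needed.
\item For $4\mid q$ you anticipate needing the coupling equations $I_{r,r+1}(a_r-a_{r+1})=0$ to propagate an obstruction. In fact the paper gets the contradiction from the single equation $J_1(a)=-\bar f_1$ alone: writing a generic $a=\sum x_t\bar f_t+y\bar h_n\in A_n(\sigma)$ and computing $J_1(a)$ yields $2x_1+x_2+1\equiv 0\pmod q$ (so $x_2$ is odd) together with $2(-1)^{n-1}x_2+4y\equiv 0\pmod q$ (so $x_2$ is even when $4\mid q$). No propagation across $s$ is needed.
\end{itemize}

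So your plan is sound in outline, but you should correct the identification of $f_s$, sharpen the $q\equiv 2\pmod 4$ case via the CRT reduction, and simplify the $4\mid q$ case to the single $J_1$ computation.
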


\begin{proof} We consider several cases.

\item Case 1. $q$ is an odd number.
We set $a_s=-\frac{1}{2}\bar{f}_s$, $s=1,\hdots,n-1$, where $\frac{1}{2}$ is taken
modulo $q$. Then the first $n-1$ equations of~(\ref{V.1.6}) are satisfied.
Further, for any $r=1,\hdots,n-2$, we have
\[
I_{r,r+1}(a_r-a_{r+1})=-\frac{1}{2}I_{r,r+1}(\bar{f}_r-\bar{f}_{r+1}),
\]
and
\[
I_{r,r+1}(\bar{f}_r)=I_{r,r+1}(\bar{f}_{r+1})=\bar{f}_r+\bar{f}_{r+1}+\bar{g}_r.
\]
Therefore, the last $n-2$ equations of~(\ref{V.1.6}) are
satisfied, too. This solution of the linear system~(\ref{V.1.6}) is
a particular case of the solutions, used in the proof of
\cite[Theorem 9, (iii)]{[1]}.

\item Case 2. $q\equiv 2\pmod{4}$.

Then $q=2q_2$, where $q_2$ is an odd number. Let us consider the
linear system~(\ref{V.1.6}) over the $\hbox{\ccc Z}/(q)$-module
$(\hbox{\ccc Z}/(q))^n$. We are looking for solutions of
~(\ref{V.1.6}) in the abelian group $A_n(\sigma)$ considered as a $\hbox{\ccc
Z}/(q)$-submodule of $(\hbox{\ccc Z}/(q))^n$.
The reduction of $A_n(\sigma)$ modulo $q_2$ is the $\hbox{\ccc
Z}/(q_2)$-module $(\hbox{\ccc Z}/(q_2))^n$, and in accord with
Case 1, the reduction of the linear system~(\ref{V.1.6}) modulo
$q_2$ is consistent. Now, let us reduce the system~(\ref{V.1.6})
modulo $2$. The reduction of $A_n(\sigma)$ modulo $2$ is the
$\hbox{\ccc Z}/(2)$-linear space $M=\hbox{\ccc Z}/(2))^{n-1}$, and
\begin{equation*}
\left\{
\begin{array}{ll}
a_r=\sum_{t=r}^{n-2}\bar{f}_t\\
r=1,\hdots,n-3\\
a_{n-2}=\bar{f}_{n-1}\\
a_{n-1}=\bar{f}_{n-2}.
\end{array}
\right.
\end{equation*}
is a solution of the linear system~(\ref{V.1.6}) in $M$. Therefore
the reduction of the linear system~(\ref{V.1.6}) modulo $2$ is
consistent over $A_n(\sigma)$. Thus, the linear
system~(\ref{V.1.6}) over the $\hbox{\ccc Z}/(q)$-module
$(\hbox{\ccc Z}/(q))^n$ is consistent, too.

\item Case 3. $q\equiv 0\pmod{4}$.

Let $a=\sum_{t=1}^{n-1}x_t\bar{f}_t+y\bar{h}_n$ be a generic
element of the abelian group $A_n(\sigma)$. We have
\[
J_1(a)=(2x_1+x_2)\bar{f}_1+2\sum_{t=3}^{n-1}[(-1)^{t-1}x_2+2x_t]f_t+
[(-1)^{n-1}x_2+2y]\bar{h}_n,
\]
and the equality $J_1(a)=-f_1$ yields $2x_1+x_2+1\equiv 0\pmod{q}$, and
$2(-1)^{n-1}x_2+4y\equiv 0\pmod{q}$, which is a contradiction.

\end{proof}

\end{document}